\renewenvironment{description}
{\begin{list}{}{%
\setlength{\labelwidth}{\leftmargin}%
\advance \labelwidth-\labelsep}}%
{\end{list}}
\newtheorem{thm}{Theorem}
\newtheorem{lem}[thm]{Lemma}
\theoremstyle{remark}
\newtheorem*{ack}{Acknowledgments}
\begin{document}

\title[Exceptional imprimitive $Q$-polynomial schemes with six classes]{Nonexistence of exceptional imprimitive $Q$-polynomial association schemes with six classes}
\author{Hajime Tanaka}
\address{Department of Mathematics, University of Wisconsin, 480 Lincoln Drive, Madison, WI 53706, U.S.A.}
\curraddr{Graduate School of Information Sciences, Tohoku University, 6-3-09 Aramaki-Aza-Aoba, Aoba-ku, Sendai 980-8579, Japan}
\email{htanaka@math.is.tohoku.ac.jp}
\author{Rie Tanaka}
\address{Department of Mathematics, University of Wisconsin, 480 Lincoln Drive, Madison, WI 53706, U.S.A.}
\email{rtanaka@math.wisc.edu}
\keywords{Association scheme; $Q$-polynomial; Imprimitivity}
\subjclass[2010]{05E30}
\begin{abstract}
Suzuki (1998) showed that an imprimitive $Q$-polynomial association scheme with first multiplicity at least three is either $Q$-bipartite, $Q$-antipodal, or with four or six classes.
The exceptional case with four classes has recently been ruled out by Cerzo and Suzuki (2009).
In this paper, we show the nonexistence of the last case with six classes.
Hence Suzuki's theorem now exactly mirrors its well-known counterpart for imprimitive distance-regular graphs.
\end{abstract}

\maketitle

\section{Introduction}

Distance-regular graphs have been extensively studied.
They form the class of \emph{metric} (or $P$-\emph{polynomial}) association schemes, and play a central role in Algebraic Combinatorics.
\emph{Cometric} (or $Q$-\emph{polynomial}) association schemes are the ``dual version'' of distance-regular graphs.
Besides their importance in the theory of association schemes, cometric schemes are of particular interest because of their connections, e.g., to combinatorial/spherical designs, (Euclidean) lattices and also mutually unbiased bases in quantum information theory.
For recent activity on cometric schemes and related topics, see \cite{BB2009EJC,MT2009EJC,DMM2010pre} and the references therein.

Major theorems concerning distance-regular graphs often have their counterparts for cometric schemes, but the proofs can be very different in terms of both techniques and difficulties; compare, e.g., the proofs of the long-standing \emph{Bannai--Ito Conjecture} and its dual \cite{BDKM2009pre,MW2009EJC}.\footnote{In \cite[p.~237]{BI1984B}, Bannai and Ito conjectured that there are only finitely many distance-regular graphs with any given valency $k>2$.}
It is a well-known elementary fact \cite[p.~315]{BI1984B} that an imprimitive distance-regular graph with valency $k>2$ is bipartite or antipodal.
On the cometric side, Suzuki \cite{Suzuki1998JACa} did show in 1998 that an imprimitive cometric scheme with first multiplicity $m>2$ \emph{and} with more than six classes is $Q$-bipartite or $Q$-antipodal, but there remained two cases of open parameter sets with four and six classes, respectively.
The exceptional case with four classes has recently been ruled out by Cerzo and Suzuki \cite{CS2009EJC}.
In this paper, we finally show that the other case with six classes does not occur (Theorem \ref{d=6 does not occur}).
Hence Suzuki's theorem now exactly mirrors the result for imprimitive distance-regular graphs.

The contents of the paper are as follows.
\S\S\ref{sec: association schemes}, \ref{sec: imprimitive cometric schemes} review basic terminology, notation and facts concerning cometric schemes and their imprimitivity.
\S\S\ref{sec: Q}--\ref{sec: proof} are concerned with the exceptional imprimitive cometric scheme with six classes.
\S\ref{sec: Q} deals with the description of its eigenmatrix, and \S\ref{sec: p161+1} with the calculation of a structure constant.
The nonexistence of the scheme is established in \S\ref{sec: proof}.

\section{Association schemes}\label{sec: association schemes}

In this section and the next, we recall necessary definitions and results.
The reader is referred to \cite{BI1984B,BCN1989B,MT2009EJC} for more background material.

Let $X$ be a finite set and $\mathcal{R}=\{R_0,R_1,\dots,R_d\}$ a set of symmetric binary relations on $X$.
For each $i$, let $A_i$ be the adjacency matrix of the graph $(X,R_i)$.
The pair $(X,\mathcal{R})$ is a \emph{symmetric association scheme} \emph{with} $d$ \emph{classes} if
\begin{description}
\item[(AS1)] $A_0=I$, the identity matrix;
\item[(AS2)] $\sum_{i=0}^dA_i=J$, the all ones matrix;
\item[(AS3)] $A_iA_j$ is a linear combination of $A_0,A_1,\dots,A_d$ for $0\leqslant i,j\leqslant d$.
\end{description}
By (AS1) and (AS3), the vector space $\bm{A}$ spanned by the $A_i$ is an algebra; this is the \emph{Bose--Mesner} \emph{algebra} of $(X,\mathcal{R})$.
Note that $\bm{A}$ is semisimple as it is closed under conjugate transposition, so that it has a basis consisting of the primitive idempotents\footnote{Observe that $\frac{1}{|X|}J$ is an idempotent in $\bm{A}$ with rank one, hence must be primitive.} $E_0=\frac{1}{|X|}J,E_1,\dots,E_d$, i.e., $E_iE_j=\delta_{ij}E_i$, $\sum_{i=0}^dE_i=I$.
By (AS2), $\bm{A}$ is closed under entrywise multiplication, denoted $\circ$.
The $A_i$ are the primitive idempotents of $\bm{A}$ with respect to $\circ$, i.e., $A_i\circ A_j=\delta_{ij}A_i$, $\sum_{i=0}^dA_i=J$.

We define $p_{ij}^h$, $q_{ij}^h$ $(0\leqslant i,j,h\leqslant d)$ by the equations
\begin{equation*}
	A_iA_j=\sum_{h=0}^dp_{ij}^hA_h, \quad E_i\circ E_j=\frac{1}{|X|}\sum_{h=0}^dq_{ij}^hE_h.
\end{equation*}
The $p_{ij}^h$ are nonnegative integers.
On the other hand, since each $E_i\circ E_j$ (being a principal submatrix of $E_i\otimes E_j$) is positive semidefinite, it follows that the $q_{ij}^h$ are real and nonnegative.
The \emph{eigenmatrices} $P$, $Q$ are defined by
\begin{equation*}
	A_i=\sum_{j=0}^dP_{ji}E_j, \quad E_i=\frac{1}{|X|}\sum_{j=0}^dQ_{ji}A_j.
\end{equation*}
Note that $P_{0i},P_{1i},\dots,P_{di}$ give the eigenvalues of $A_i$.
Let
\begin{equation*}
	k_i=P_{0i}, \quad m_i=Q_{0i}.
\end{equation*}
It follows that $k_i$ is the valency of $(X,R_i)$ and $m_i=\mathrm{rank}(E_i)$.
The $m_i$ are called the \emph{multiplicities} of $(X,\mathcal{R})$.

We say that $(X,\mathcal{R})$ is \emph{primitive} if the graphs $(X,R_i)$ $(1\leqslant i\leqslant d)$ are connected,
and \emph{imprimitive} otherwise.
Let $I_r$ (resp. $J_r$) denote the $r\times r$ identity (resp. all ones) matrix.
Then
\begin{lem}\label{lem:imprimitivity}
The following are equivalent:
\begin{enumerate}
\item $(X,\mathcal{R})$ is imprimitive.
\item There are $\mathcal{I},\mathcal{J}\subseteq\{0,1,\dots,d\}$ such that $\frac{1}{r}\sum_{i\in\mathcal{I}}A_i=\sum_{i\in\mathcal{J}}E_i=\frac{1}{r}J_r\otimes I_s$ for some integers $r,s\geqslant 2$ and an ordering of $X$.
\end{enumerate}
Moreover, if \textup{(i)}, \textup{(ii)} hold then $r=\sum_{i\in\mathcal{I}}k_i$ and $s=\sum_{i\in\mathcal{J}}m_i$.
\end{lem}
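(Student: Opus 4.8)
The plan is to treat both directions and the ``moreover'' statement at once by extracting, from any disconnected relation, the $0$--$1$ matrix $N\in\bm{A}$ that represents the ``lies in the same connected component'' equivalence relation. For (i)~$\Rightarrow$~(ii), suppose $(X,R_u)$ is disconnected for some $1\leqslant u\leqslant d$. I would first note that vertices $x,y$ lie in a common connected component of $(X,R_u)$ exactly when $\bigl((I+A_u)^{|X|}\bigr)_{xy}>0$: a lazy walk of length at most $|X|-1$ joins any two vertices of one component, while no walk whatsoever joins two vertices of different components. Since $(I+A_u)^{|X|}\in\bm{A}$, it expands as $\sum_{h=0}^{d}c_hA_h$; because the $A_h$ are $0$--$1$ matrices with pairwise disjoint supports and $(I+A_u)^{|X|}$ has nonnegative entries, every $c_h\geqslant 0$, with $c_h>0$ precisely when $R_h$ is contained in the same-component relation. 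Putting $\mathcal{I}=\{h:c_h>0\}$, the matrix $N=\sum_{i\in\mathcal{I}}A_i$ is then exactly the $0$--$1$ adjacency matrix of that equivalence relation, and it lies in $\bm{A}$. I expect this passage to be the crux: it is where axiom (AS2), asserting that the supports of the $A_h$ partition $X\times X$, does the real work, converting an analytic reachability statement into a genuine member of the Bose--Mesner algebra.

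The remainder is bookkeeping. Because $N$ represents an equivalence relation and $N\mathbf{1}=\bigl(\sum_{i\in\mathcal{I}}k_i\bigr)\mathbf{1}$ is a constant vector, all components share the common size $r=\sum_{i\in\mathcal{I}}k_i$; as $0,u\in\mathcal{I}$ we have $r\geqslant 1+k_u\geqslant 2$, and disconnectedness forces $s=|X|/r\geqslant 2$ components. A suitable ordering of $X$, grouping the vertices by component, brings $N$ into the form $J_r\otimes I_s$, so that $N^{2}=rN$ and $\frac{1}{r}N$ is an idempotent of $\bm{A}$. Expanding $\frac{1}{r}N=\sum_{j=0}^{d}c'_jE_j$ in the basis of primitive idempotents and comparing with $\bigl(\frac{1}{r}N\bigr)^{2}=\frac{1}{r}N$ forces $c'_j\in\{0,1\}$, whence $\frac{1}{r}N=\sum_{i\in\mathcal{J}}E_i$ with $\mathcal{J}=\{j:c'_j=1\}$. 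This yields (ii), and taking ranks gives $s=\mathrm{rank}(N)=\mathrm{rank}(J_r\otimes I_s)=\sum_{i\in\mathcal{J}}m_i$, the ``moreover'' assertion for $s$.

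For (ii)~$\Rightarrow$~(i) I would argue directly: the equation $\sum_{i\in\mathcal{I}}A_i=J_r\otimes I_s$ exhibits $\bigcup_{i\in\mathcal{I}}R_i$ as a disjoint union of $s\geqslant 2$ cliques of size $r$, and comparing row sums gives $r=\sum_{i\in\mathcal{I}}k_i\geqslant 2$ (the stated value of $r$). Choosing any $u\in\mathcal{I}$ with $u\neq 0$, all $R_u$-edges lie inside these cliques, so no $R_u$-path connects distinct cliques and $(X,R_u)$ is disconnected; hence $(X,\mathcal{R})$ is imprimitive.
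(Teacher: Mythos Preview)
Your argument is correct. The paper, however, does not supply its own proof of this lemma; it simply cites \cite[\S 2.9]{BI1984B} and moves on, treating the result as standard background. So there is nothing to compare against here beyond noting that what you have written is essentially the textbook argument one finds in that reference: extract the same-component relation as a $0$--$1$ matrix in $\bm{A}$ via the support of a power of $I+A_u$, observe it has constant row sums so all blocks have equal size, recognise the resulting matrix as a scalar multiple of an idempotent, and decompose it over the $E_j$.

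Two very minor remarks, neither of which is a gap. First, when you assert $c'_j\in\{0,1\}$ from $(c'_j)^2=c'_j$, you are implicitly using that the $c'_j$ are real; this is immediate since $\frac{1}{r}N$ is real symmetric and the $c'_j$ are its eigenvalues, but it is worth a word. Second, in the direction (ii)~$\Rightarrow$~(i) you need $0\in\mathcal{I}$ in order to pick a nonzero $u\in\mathcal{I}$; you use this tacitly, and it follows because $J_r\otimes I_s$ has ones on its diagonal, forcing $A_0$ to appear in the sum. With those two clauses made explicit your write-up is complete.
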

See, e.g., \cite[\S 2.9]{BI1984B}.
Suppose now that $(X,\mathcal{R})$ is imprimitive, so that $\bigcup_{i\in\mathcal{I}}R_i$ is an equivalence relation on $X$.
Then there is a natural structure of a symmetric association scheme $(\tilde{X},\tilde{\mathcal{R}})$ on the set $\tilde{X}$ of all equivalence classes, called a \emph{quotient scheme} of $(X,\mathcal{R})$.
Let $E=\frac{1}{r}J_r\otimes I_s$ be as in (ii) above.
The Bose--Mesner algebra $\tilde{\bm{A}}$ of $(\tilde{X},\tilde{\mathcal{R}})$ is canonically isomorphic to the ``Hecke algebra'' $E\bm{A}E$ (which is also an ideal of $\bm{A}$);
to be more precise, the primitive idempotents $\{\tilde{E}_i\}_{i\in\mathcal{J}}$ of $\tilde{\bm{A}}$ are indexed by $\mathcal{J}$ and satisfy $E_i=\frac{1}{r}J_r\otimes \tilde{E}_i$ $(i\in\mathcal{J})$, and for every $i$ ($0\leqslant i\leqslant d$) we have $EA_i=(\sum_{j\in\mathcal{I}}p_{ij}^i)\cdot\frac{1}{r}J_r\otimes\tilde{A}$ for some adjacency matrix $\tilde{A}$ of $(\tilde{X},\tilde{\mathcal{R}})$.

\section{Imprimitive cometric schemes}\label{sec: imprimitive cometric schemes}

We say that $(X,\mathcal{R})$ is \emph{cometric} (or $Q$-\emph{polynomial}) with respect to the ordering $\{E_i\}_{i=0}^d$ if for each $i$ $(0\leqslant i\leqslant d)$ there is a polynomial $v_i^*$ with degree $i$ such that $Q_{ji}=v_i^*(Q_{j1})$ $(0\leqslant j\leqslant d)$.
Such an ordering is called a $Q$-\emph{polynomial ordering}.
Note that $(X,\mathcal{R})$ is cometric with respect to the above ordering if and only if for all $i,j,h$ $(0\leqslant i,j,h\leqslant d)$ we have $q_{ij}^h=0$ if $i+j<h$ and $q_{ij}^h\ne 0$ if $i+j=h$.
Suppose now that $(X,\mathcal{R})$ is cometric and set
\begin{equation*}
	m=m_1,\quad a_i^*=q_{1i}^i, \quad b_i^*=q_{1,i+1}^i, \quad c_i^*=q_{1,i-1}^i.
\end{equation*}
It follows that
\begin{equation*}
	a_i^*+b_i^*+c_i^*=m,\quad b_0^*=m,\quad a_0^*=b_d^*=c_0^*=0,\quad c_1^*=1.
\end{equation*}
In fact, the $q_{ij}^h$ are entirely determined by the \emph{Krein array}
\begin{equation*}
	\iota^*(X,\mathcal{R})=\{b_0^*,b_1^*,\dots,b_{d-1}^*;c_1^*,c_2^*,\dots,c_d^*\}.
\end{equation*}
The $v_i^*$ are also determined recursively:
\begin{equation}\label{recurrence}
	v_0^*(x)=1,\quad v_1^*(x)=x,\quad xv_i^*(x)=c_{i+1}^*v_{i+1}^*(x)+a_i^*v_i^*(x)+b_{i-1}^*v_{i-1}^*(x).
\end{equation}
Moreover, if we formally define $v_{d+1}^*$ by \eqref{recurrence} with $i=d$ and $c_{d+1}^*=1$, then we find
\begin{equation}\label{v* d+1}
	c_1^*c_2^*\dots c_d^*v_{d+1}^*(x)=\prod_{i=0}^d(x-Q_{i1}).
\end{equation}

It is well known \cite[p.~315]{BI1984B}  that an imprimitive distance-regular graph with valency $k>2$ is bipartite or antipodal.
On the cometric side, Suzuki proved

\begin{thm}[\cite{Suzuki1998JACa}]\label{classification}
If $(X,\mathcal{R})$ is an imprimitive cometric scheme with $Q$-polynomial ordering $\{E_i\}_{i=0}^d$ and $m>2$, then at least one of the following holds:
\begin{enumerate}
\item $(X,\mathcal{R})$ is $Q$-bipartite: $a_i^*=0$ for $0\leqslant i\leqslant d$.
\item $(X,\mathcal{R})$ is $Q$-antipodal: $b_i^*=c_{d-i}^*$ for $0\leqslant i\leqslant d$, except possibly $i=\lfloor \frac{d}{2}\rfloor$.
\item $d=4$ and $\iota^*(X,\mathcal{R})=\{m,m-1,1,b_3^*;1,c_2^*,m-b_3^*,1\}$.
\item $d=6$ and $\iota^*(X,\mathcal{R})=\{ m,m-1,1,b_3^*, b_4^*, 1; \ 1, c_2^*, m-b_3^*, 1, c_5^*, m\}$, where $a_2^*= a_4^* + a_5^*$.
\end{enumerate}
\end{thm}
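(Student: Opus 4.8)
The plan is to convert imprimitivity into a combinatorial condition on the index set of the idempotents and then to classify the possibilities using the cometric structure, mirroring on the ``dual'' side the classical dichotomy for distance-regular graphs. First I would apply Lemma~\ref{lem:imprimitivity} to obtain $\mathcal{J}\subseteq\{0,1,\dots,d\}$ with $E:=\sum_{i\in\mathcal{J}}E_i=\frac1r J_r\otimes I_s$ and $r,s\geqslant 2$, so that $0\in\mathcal{J}$ while $\mathcal{J}\neq\{0\}$ and $\mathcal{J}\neq\{0,\dots,d\}$. Because every nonzero entry of $E$ equals $\frac1r$ we have $E\circ E=\frac1r E$; expanding the left side through $E_i\circ E_j=\frac1{|X|}\sum_h q_{ij}^hE_h$ and comparing coefficients shows, using $q_{ij}^h\geqslant 0$, that $\mathcal{J}$ is \emph{Krein-closed}: if $i,j\in\mathcal{J}$ and $q_{ij}^h\neq 0$ then $h\in\mathcal{J}$ (there is also a constancy condition $\frac1{|X|}\sum_{i,j\in\mathcal{J}}q_{ij}^h=\frac1r$ for $h\in\mathcal{J}$, which I would keep in reserve).

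Next I would determine the shape of $\mathcal{J}$. The $Q$-polynomial hypothesis gives $q_{ij}^{i+j}\neq 0$ for $i+j\leqslant d$, and dually (the cometric analogue of the metric property) $q_{ij}^{|i-j|}\neq 0$, for example $q_{1,i}^{i-1}=b_{i-1}^*\neq 0$. Krein-closure then makes $\mathcal{J}$ closed under capped addition and under differences, so with $g:=\min(\mathcal{J}\setminus\{0\})$ a Euclidean-algorithm argument yields $\mathcal{J}=\{0,g,2g,\dots\}\cap\{0,\dots,d\}$. Since $q_{11}^2\neq 0$, having $1\in\mathcal{J}$ would propagate (via $q_{12}^3\neq 0$, $q_{13}^4\neq 0$, and so on) to $\mathcal{J}=\{0,\dots,d\}$, which is excluded; hence $g\geqslant 2$. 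The whole theorem now reduces to pinning down $g$.

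The core is the assertion that $g\in\{2,d\}$ or $(d,g)\in\{(4,3),(6,3)\}$. I would obtain this by replaying the distance-regular dichotomy on the \emph{dual intersection array} $\{b_0^*,\dots,b_{d-1}^*;c_1^*,\dots,c_d^*\}$, with the first multiplicity $m$ in the role of the valency: the recurrence~\eqref{recurrence}, the factorization~\eqref{v* d+1}, and nonnegativity of the $q_{ij}^h$ furnish exactly the formal relations the graph-theoretic proof exploits. The case $g=d$ means $\mathcal{J}=\{0,d\}$, i.e. $q_{dd}^h=0$ for $0<h<d$; expanding $(v_d^*)^2$ via~\eqref{recurrence} turns this into the $Q$-antipodal relation $b_i^*=c_{d-i}^*$ of (ii). The case $g=2$ means the even indices are closed, and feeding the genuine block form of $E$ (not merely closure) together with generation by $E_1$ into the three-term relations forces $a_i^*=q_{1i}^i$ to vanish for all $i$, which is the $Q$-bipartite case (i); equivalently the dual eigenvalues $Q_{i1}$, the roots of $\prod_i(x-Q_{i1})$ in~\eqref{v* d+1}, become symmetric about $0$. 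Finally the two exceptional pairs $(d,g)=(4,3)$ and $(6,3)$ give $\mathcal{J}=\{0,3\}$ and $\mathcal{J}=\{0,3,6\}$; here the long strings of forced vanishings (e.g.\ $q_{gg}^h=0$ for $h\notin\{0,g,2g\}$) substituted into~\eqref{recurrence} produce the displayed Krein arrays of (iii) and (iv), including $b_1^*=m-1$, $c_3^*=m-b_3^*$, the entries equal to $1$, and, for $d=6$, the identity $a_2^*=a_4^*+a_5^*$ read off from a single additional Krein relation.

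I expect the main obstacle to be the cometric replay of the distance-regular dichotomy, for two linked reasons. First, the dual array $\{b_i^*;c_i^*\}$ is \emph{not} the intersection array of an actual graph, so those steps of the graph-theoretic proof that rely on integrality of the dual valencies (here the $k_i=P_{0i}$) have no cometric counterpart; they fail precisely at the parameter sets $(d,g)=(4,3)$ and $(6,3)$, which is exactly why these two survive as genuine exceptions rather than being eliminated. Second, discarding the remaining near-misses with $3\leqslant g\leqslant d-1$ is inherently case-heavy and leans decisively on the hypothesis $m>2$---the dual of valency $>2$---to rule out the dual-polygon degeneracies that would otherwise persist, so the delicate point is the repeated, careful use of $m>2$ rather than any single clean inequality.
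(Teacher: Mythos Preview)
The paper does not contain a proof of this theorem; it is quoted verbatim from Suzuki \cite{Suzuki1998JACa} as background, with only the remark afterward that cases (i)--(iv) correspond to $\mathcal{J}=\{0,2,4,\dots\}$, $\{0,d\}$, $\{0,3\}$, and $\{0,3,6\}$. There is therefore no in-paper argument to compare your proposal against.

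For what it is worth, your outline does follow the broad architecture of Suzuki's original proof: pass via Lemma~\ref{lem:imprimitivity} to a Krein-closed subset $\mathcal{J}$, use the cometric triangle inequalities $q_{ij}^{i+j}\ne 0$ and $q_{ij}^{|i-j|}\ne 0$ to force $\mathcal{J}=\{0,g,2g,\dots\}\cap\{0,\dots,d\}$, and then constrain $g$. A few places would need tightening if you carried this out. The step you correctly identify as the obstacle---reducing to $g\in\{2,d\}$ or $(d,g)\in\{(4,3),(6,3)\}$---is not a straightforward transcription of the metric dichotomy; Suzuki's argument here is a genuine case analysis on the vanishing Krein parameters implied by closure of $\mathcal{J}$, and the hypothesis $m>2$ enters through inequalities on the $b_i^*,c_i^*$ rather than through any single clean step. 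Also, deducing $a_i^*=0$ from $g=2$ requires more than closure (you acknowledge this), and the specific constraints in (iii) and (iv), including $a_2^*=a_4^*+a_5^*$, come from expanding several relations of the form $q_{3j}^h=0$ for $h\notin\mathcal{J}$, not one.
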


It should be remarked that, with the notation of Lemma \ref{lem:imprimitivity}, the cases (i)--(iv) above correspond to $\mathcal{J}=\{0,2,4,\dots\}$, $\mathcal{J}=\{0,d\}$, $\mathcal{J}=\{0,3\}$ and $\mathcal{J}=\{0,3,6\}$, respectively.
Case (iii) in the theorem has recently been ruled out by Cerzo and Suzuki \cite{CS2009EJC} based on the integrality conditions of the $P_{ij}$.
In this paper, we prove

\begin{thm}\label{d=6 does not occur}
Case \textup{(iv)} in Theorem \textup{\ref{classification}} does not occur.
In particular, an imprimitive cometric scheme with $m>2$ is $Q$-bipartite or $Q$-antipodal.
\end{thm}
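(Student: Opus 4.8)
The plan is to suppose, for contradiction, that a scheme $(X,\mathcal{R})$ realizing the Krein array of case \textup{(iv)} exists and is neither $Q$-bipartite nor $Q$-antipodal, and then to pin down its eigenmatrices tightly enough to exhibit an intersection number that cannot be a nonnegative integer. First I would reduce the parameters: from $a_i^*+b_i^*+c_i^*=m$ one reads off $a_2^*=m-1-c_2^*$, $a_4^*=m-1-b_4^*$ and $a_5^*=m-1-c_5^*$, and substituting these into the stated relation $a_2^*=a_4^*+a_5^*$ eliminates $c_2^*$ in favour of $b_4^*,c_5^*,m$. The assumption that the scheme is not $Q$-antipodal forces $c_5^*\ne m-1$ (equivalently $a_5^*\ne 0$), keeping us strictly inside the exceptional regime of Theorem \ref{classification}. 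The multiplicities are then determined by $m_i=b_0^*b_1^*\cdots b_{i-1}^*/(c_1^*c_2^*\cdots c_i^*)$, and demanding that each $m_i$, as well as $|X|=\sum_i m_i$, be a positive integer should already cut the admissible quadruples $(m,b_3^*,b_4^*,c_5^*)$ down sharply.

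Next I would compute the eigenmatrices. By \eqref{v* d+1} the dual eigenvalues $Q_{j1}=\theta_j^*$ are the roots of the explicit degree-$7$ polynomial $c_1^*c_2^*\cdots c_6^*\,v_7^*(x)=\prod_{i=0}^6(x-Q_{i1})$ produced by the recurrence \eqref{recurrence}. The imprimitivity of Lemma \ref{lem:imprimitivity} with $\mathcal{J}=\{0,3,6\}$ is reflected in the Krein array by $b_2^*=c_4^*=1$ and $a_3^*=0$, and, combined with the partial symmetry $b_i^*=c_{6-i}^*$ holding for $i\ne 1,3,4$, should let us factor this polynomial and solve for the $\theta_j^*$ in closed form (here the quotient scheme attached to $E_0+E_3+E_6$ is a two-class cometric scheme, i.e. a strongly regular graph with multiplicities $1,m_3,m_6$, which organizes the factorization). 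From the $\theta_j^*$ one recovers all of $Q$ via $Q_{ji}=v_i^*(\theta_j^*)$, and then $P$ and the valencies $k_i=P_{0i}$ from $PQ=|X|I$ together with the orthogonality relation $\sum_j m_j P_{ji}P_{jl}=|X|k_i\delta_{il}$. Integrality of the $k_i$ and of the eigenvalues $P_{ji}$ should prune the parameter space still further, ideally to a finite list or a one-parameter family.

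With $P$, the $m_i$ and the $k_i$ in hand, I would then evaluate a single, carefully chosen structure constant through
\[
p_{ij}^h=\frac{1}{|X|\,k_h}\sum_{l=0}^{6}m_l\,P_{li}P_{lj}P_{lh},
\]
the natural candidate being a structure constant of the form $p_{16}^1$, the subject of \S\ref{sec: p161+1}. The aim is to show that, once the closed forms above are inserted and all of the accumulated integrality and nonnegativity constraints are imposed, this quantity is forced either to be non-integral or to be strictly negative for every admissible parameter value in the regime $c_5^*\ne m-1$. Either outcome contradicts $p_{16}^1\in\mathbb{Z}_{\geqslant 0}$; and since case \textup{(iii)} has already been excluded by Cerzo and Suzuki, only the $Q$-bipartite and $Q$-antipodal alternatives of Theorem \ref{classification} survive, which is exactly Theorem \ref{d=6 does not occur}.

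The hard part will be twofold. First, extracting honest closed-form dual eigenvalues from the $7\times 7$ tridiagonal dual intersection matrix while several parameters remain free: a generic degree-$7$ equation is intractable by hand, so the imprimitive block structure (and the governing strongly regular quotient) must genuinely be exploited to factor the characteristic polynomial. Second, and more delicate, is identifying the one intersection number whose denominator or sign exposes the inconsistency, and then verifying that the contradiction persists throughout the exceptional regime rather than only at isolated specializations — in particular one must check that any parameter values making $p_{16}^1$ admissible are precisely those degenerating the scheme back into the $Q$-antipodal case already covered by Theorem \ref{classification}\,\textup{(ii)}.
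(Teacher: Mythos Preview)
Your high-level architecture---factor $v_7^*$ using the imprimitive block structure, write down $Q$, isolate $p_{16}^1$, and contradict $p_{16}^1\in\mathbb{Z}_{\geqslant 0}$---matches the paper exactly, and your instinct that the two-class quotient organizes the factorization is correct. But two of the steps would not go through as you describe them.

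First, the route to $p_{16}^1$. The summation $p_{ij}^h=\frac{1}{|X|k_h}\sum_l m_lP_{li}P_{lj}P_{lh}$ is available in principle, but the paper reports that carrying it out here is ``extremely involved'' (they confirmed the answer only by computer). Instead the paper reads $p_{16}^1$ off the quotient directly: since $\mathcal{I}=\{0,6\}$ one has $A_1+A_2+A_3=J_{m+1}\otimes\tilde{A}$, and counting $R_1$-neighbours of a fixed vertex inside a single $(R_0\cup R_6)$-class gives $k_1=k(p_{16}^1+1)$ where $k$ is the valency of $\tilde{A}$. Three linear relations in $k_1,k_2,k_3$ (one row-sum identity and two orthogonality relations $(PQ)_{0j}=0$) then yield
\[
p_{16}^1+1=\frac{(m+1)(x_2x_3+m)}{(x_1-x_2)(x_1-x_3)},
\]
with $x_1>x_2>x_3$ the roots of the cubic factor of $v_7^*$. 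Your plan to first produce all of $P$ and the $k_i$ in closed form is unnecessary and, since the $x_i$ are generically irrational, unpleasant by hand.

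Second, and this is the real gap, your endgame is the wrong mechanism. You expect integrality of $m_i,k_i,P_{ji}$ to prune the parameters to a finite list or a one-parameter family, after which $p_{16}^1$ can be tested directly for non-integrality or negativity. The paper does nothing of the sort: the Krein parameters remain free real numbers throughout, no finiteness is ever obtained, and the displayed formula is not visibly ``non-integral or strictly negative''. The contradiction is a pure inequality argument. One first rules out $p_{16}^1=0$ using the bound $m-1<x_1<m$ (obtained by evaluating the cubic at $m-1$), so that $p_{16}^1+1\geqslant 2$. One then introduces an auxiliary scalar $\alpha$ with $\frac{m(m+1)}{m^2+1}<\alpha<\min\bigl\{\frac{m+1}{3},2\bigr\}$; the inequality $p_{16}^1+1>\alpha$ becomes a quadratic inequality in $x_1$, and a second quadratic inequality in $x_1$ comes from dividing the cubic relation by $x_1$ and using $ma_2^*-a_5^*c_3^*\geqslant 0$. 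Adding the two eliminates the $x_1^2$ term and collapses to $0<\alpha c_2^*(m-1)\leqslant m(m+1)-\alpha(m^2+1)<0$. This $\alpha$-trick is the entire proof, and nothing in your proposal anticipates it.
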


\section{Discussions: $Q$}\label{sec: Q}

For the rest of this paper, we shall always assume that we are in case (iv) of Theorem \ref{classification}, so that $(X,\mathcal{R})$ is a cometric scheme with six classes and Krein array
\begin{equation*}
	\iota^*(X,\mathcal{R})=\{m,m-1,1,b_3^*, b_4^*, 1; \ 1, c_2^*, m-b_3^*, 1, c_5^*, m\},\quad a_2^*= a_4^* + a_5^*.
\end{equation*}
Note that
\begin{equation*}
	a_2^*=m-1-c_2^*,\quad a_4^*=m-1-b_4^*,\quad a_5^*=m-1-c_5^*,\quad a_1^*=a_3^*=a_6^*=0.
\end{equation*}
This section is devoted to the description of $Q$.
First, we routinely obtain
\begin{align*}
	c_2^*v_2^*(x)&=x^2-m, \\
	c_2^*c_3^*v_3^*(x)&=x^3-a_2^*x^2-(m+c_2^*(m-1))x+ma_2^*, \\
	v_4^*(x)&=xv_3^*(x)-v_2^*(x), \\
	c_5^*v_5^*(x)&=(x-a_4^*)v_4^*(x)-b_3^*v_3^*(x), \\
	mc_5^*v_6^*(x)&=(x^2-a_2^*x-c_2^*(m-1))v_4^*(x)-b_3^*(x-a_5^*)v_3^*(x).
\end{align*}
We now describe the $Q_{i1}$ $(0\leqslant i\leqslant 6)$.
\begin{lem}
Concerning \eqref{v* d+1}, it follows that
\begin{align*}
	mc_2^*c_3^*c_5^*v_7^*(x)=&(x^3-a_2^*x^2-(m+c_2^*(m-1))x+ma_2^*-a_5^*c_3^*) \\
	& \times (x^2+c_2^*x-m)(x-m)(x+1).
\end{align*}
\end{lem}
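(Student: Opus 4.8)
The plan is to obtain $v_7^*$ from the three-term recurrence \eqref{recurrence} and then recognize the factorization on the right-hand side. Applying \eqref{recurrence} with $i=6$ and the formal convention $c_7^*=1$, and using $a_6^*=0$, $b_5^*=1$, gives the clean relation $v_7^*(x)=xv_6^*(x)-v_5^*(x)$. I would then multiply through by $mc_5^*$ and substitute the two formulas already computed above for $mc_5^*v_6^*(x)$ and $c_5^*v_5^*(x)$, so that everything is expressed in terms of $v_4^*$ and $v_3^*$ alone. Collecting the coefficients of $v_4^*$ and $v_3^*$ produces a cubic times $v_4^*$ minus $b_3^*(x^2-a_5^*x-m)$ times $v_3^*$.

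Next I would eliminate $v_4^*$ using $v_4^*(x)=xv_3^*(x)-v_2^*(x)$, reducing $mc_5^*v_7^*$ to an explicit combination $A(x)v_3^*(x)+B(x)v_2^*(x)$ with $A$ of degree $4$ and $B$ of degree $3$. Multiplying by $c_2^*c_3^*$ and inserting the known expressions for $c_2^*c_3^*v_3^*(x)$ (the displayed cubic) and $c_2^*c_3^*v_2^*(x)=c_3^*(x^2-m)$ then yields $mc_2^*c_3^*c_5^*v_7^*(x)$ as a fully explicit monic degree-$7$ polynomial in $x$ whose coefficients are expressions in $m$ and the $a_i^*,b_i^*,c_i^*$. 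By \eqref{v* d+1} (noting $c_1^*c_2^*\cdots c_6^*=mc_2^*c_3^*c_5^*$, since $c_1^*=c_4^*=1$, $c_3^*=m-b_3^*$ and $c_6^*=m$) this polynomial is exactly $\prod_{i=0}^6(x-Q_{i1})$, so the remaining task is to exhibit its factorization.

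The final step is the factorization itself, which is where the case-(iv) hypothesis $a_2^*=a_4^*+a_5^*$ does the real work: it already underlies the formula for $mc_5^*v_6^*$, and it is precisely what forces the stated factors to split off. Concretely, I would verify that $x=m$ is a root (consistent with $Q_{01}=m_1=m$), that $x=-1$ is a root, and that the quadratic $x^2+c_2^*x-m$ divides the polynomial; dividing these three factors out should leave the cubic $x^3-a_2^*x^2-(m+c_2^*(m-1))x+ma_2^*-a_5^*c_3^*$, i.e. the displayed cubic $c_2^*c_3^*v_3^*(x)$ corrected by the constant $-a_5^*c_3^*$. The main obstacle is purely the bookkeeping: keeping the substitutions $a_i^*=m-1-(\cdot)$ straight and confirming that the quadratic factor and the constant-term correction emerge cleanly. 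A reliable way to control this is to check divisibility by each claimed factor separately (for the linear factors, evaluating at $x=m$ and $x=-1$; for the quadratic, reducing modulo $x^2+c_2^*x-m$) rather than expanding the full product, invoking $a_2^*=a_4^*+a_5^*$ at each point where the coefficients would otherwise fail to collapse.
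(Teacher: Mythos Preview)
Your plan is correct and would reach the statement, and the first half of it (writing $v_7^*=xv_6^*-v_5^*$ and reducing to a cubic times $v_4^*$ minus $b_3^*(x^2-a_5^*x-m)\,v_3^*$) is exactly what the paper does. The divergence is at the factorization step. Where you propose to push everything down to an explicit degree-$7$ polynomial in $x$ and then verify the four factors one at a time (evaluating at $x=m$, at $x=-1$, reducing modulo $x^2+c_2^*x-m$), the paper instead spots a \emph{symbolic} factorization at the level of the $v_i^*$: using $a_2^*=a_4^*+a_5^*$ to recognize the cubic coefficient of $v_4^*$ as $c_2^*c_3^*v_3^*(x)-ma_5^*$, and then the substitution $xv_3^*=v_4^*+v_2^*$, it rewrites
\[
mc_5^*v_7^*(x)=(c_2^*v_3^*(x)-a_5^*)\bigl(c_3^*xv_3^*(x)-mv_2^*(x)\bigr).
\]
The first factor, multiplied by $c_3^*$, is precisely the stated cubic; the second, multiplied by $c_2^*$, expands to $(x^2+c_2^*x-m)(x-m)(x+1)$ in one short line. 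So both approaches work, but the paper's buys you the factorization for free and explains \emph{why} the cubic is $c_2^*c_3^*v_3^*(x)-a_5^*c_3^*$, whereas your route trades that insight for a longer but entirely mechanical verification.
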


\begin{proof}
By definition,
\begin{equation*}
	v_7^*(x)=xv_6^*(x)-v_5^*(x).
\end{equation*}
Eliminating $v_6^*,v_5^*$ we find
\begin{align*}
	mc_5^*v_7^*(x)=&(x^3-a_2^*x^2-(m+c_2^*(m-1))x+ma_4^*)v_4^*(x) \\
	&-b_3^*(x^2-a_5^*x-m)v_3^*(x) \\
	=&(c_2^*c_3^*v_3^*(x)-ma_5^*)v_4^*(x)-b_3^*c_2^*v_2^*(x)v_3^*(x)+a_5^*b_3^*xv_3^*(x);
\intertext{the replacement $xv_3^*(x)=v_4^*(x)+v_2^*(x)$ in the last term implies:}
	=&(c_2^*v_3^*(x)-a_5^*)(c_3^*v_4^*(x)-b_3^*v_2^*(x)) \\
	=&(c_2^*v_3^*(x)-a_5^*)(c_3^*xv_3^*(x)-mv_2^*(x)).
\end{align*}
Now the result follows from a straightforward calculation.
\end{proof}

Let $x_i=Q_{i1}$ ($0\leqslant i\leqslant 6$).\footnote{It is customary to use $\theta_i^*$ instead of $x_i$, but we decided to avoid introducing too many asterisks.}
We may assume $x_0=m$, $x_6=-1$, and $x_1,x_2,x_3$ ($x_1>x_2>x_3$) are the roots of 
\begin{equation}\label{cubic factor}
	x^3-a_2^*x^2-(m+c_2^*(m-1))x+ma_2^*-a_5^*c_3^*=0,
\end{equation}
and $x_4,x_5$ ($x_4>x_5$) are the roots of  
\begin{equation*}
	x^2+c_2^*x-m=0.
\end{equation*}
In particular, $x_1,x_2,x_3$ satisfy
\begin{align}
	x_1+x_2+x_3&=a_2^*, \label{x1+x2+x3} \\
	x_1x_2+x_1x_3+x_2x_3&=-m-c_2^*(m-1). \label{x1x2+x1x3+x2x3}
\end{align}
The value of the left-hand side in \eqref{cubic factor} at $x=m-1$ equals $-mc_2^*-a_5^*c_3^*<0$, from which it follows that
\begin{equation}\label{m-1<x1<m}
	m-1<x_1<m.
\end{equation}

The other five columns of $Q$ are determined by the recursion \eqref{recurrence}:
\begin{equation}\label{Q}
	Q=\begin{bmatrix}
	1 & m & \frac{m(m-1)}{c_2^*} & m_3 & mm_3-\frac{m(m-1)}{c_2^*} & mm_6 & m_6 \\
	1 & x_1 & \frac{x_1^2-m}{c_2^*} & \frac{a_5^*}{c_2^*} & -\frac{x_1^2-a_5^*x_1-m}{c_2^*} & -\frac{b_4^*x_1}{c_2^*} & -\frac{b_4^*}{c_2^*} \\
	1 & x_2 & \frac{x_2^2-m}{c_2^*} & \frac{a_5^*}{c_2^*} & -\frac{x_2^2-a_5^*x_2-m}{c_2^*} & -\frac{b_4^*x_2}{c_2^*} & -\frac{b_4^*}{c_2^*} \\
	1 & x_3 & \frac{x_3^2-m}{c_2^*} & \frac{a_5^*}{c_2^*} & -\frac{x_3^2-a_5^*x_3-m}{c_2^*} & -\frac{b_4^*x_3}{c_2^*} & -\frac{b_4^*}{c_2^*} \\
	1 & x_4 & -x_4 & -\frac{m}{c_3^*} & -\frac{b_3^*x_4}{c_3^*} & \frac{b_3^*x_4}{c_3^*} & \frac{b_3^*}{c_3^*} \\
	1 & x_5 & -x_5 & -\frac{m}{c_3^*} & -\frac{b_3^*x_5}{c_3^*} & \frac{b_3^*x_5}{c_3^*} & \frac{b_3^*}{c_3^*} \\
	1 & -1  & -\frac{m-1}{c_2^*} & m_3  & -m_3+\frac{m-1}{c_2^*} & -m_6 & m_6 
	\end{bmatrix},
\end{equation}
where $m_3=\frac{m(m-1)}{c_2^*c_3^*}$, $m_6=\frac{(m-1)b_3^*b_4^*}{c_2^*c_3^*c_5^*}$.
In particular:
\begin{equation}\label{|X|}
	|X|=\sum_{i=0}^6m_i=(m+1)(1+m_3+m_6).
\end{equation}
It should be remarked that the validity of \eqref{Q} can be checked by the formula $B_1^*Q^{\mathsf{T}}=Q^{\mathsf{T}}\mathrm{diag}(x_0,x_1,\dots,x_6)$, where $B_1^*$ is the tridiagonal matrix defined by $(B_1^*)_{ij}=q_{1i}^j$ (cf. \cite[p.~91]{BI1984B}).

\section{Discussions: $p_{16}^1+1$}\label{sec: p161+1}

The section is devoted to the computation of $p_{16}^1+1$.
It should be remarked that there is a formula which expresses the $p_{ij}^h$ as rational functions of the $Q_{ij}$ (see \cite[p.~65]{BI1984B}).
However, the required calculation turns out to be extremely involved, so that we provide a more conceptual (computer-free) approach looking at the quotient scheme $(\tilde{X},\tilde{\mathcal{R}})$.\footnote{We did, however, double-check the formula \eqref{p161+1} below using a software package MAGMA (\href{http://magma.maths.usyd.edu.au/magma/}{http://magma.maths.usyd.edu.au/magma/}).}

We have $\mathcal{J}=\{0,3,6\}$ with the notation of Lemma \ref{lem:imprimitivity}, so that $(\tilde{X},\tilde{\mathcal{R}})$ has two classes.
Since $E_0+E_3+E_6=\frac{1}{m+1}(A_0+A_6)$ by \eqref{Q} and \eqref{|X|}, we find $\mathcal{I}=\{0,6\}$ and
\begin{equation*}
	r=k_6+1=m+1.
\end{equation*}
Note also that $E_0,E_3,E_6$ are linear combinations of $A_0+A_6,A_1+A_2+A_3,A_4+A_5$.
Hence we may write
\begin{equation*}
	A_1+A_2+A_3=J_r\otimes\tilde{A}, \quad A_4+A_5=J_r\otimes\tilde{A}',
\end{equation*}
where $\tilde{A},\tilde{A}'$ are the nontrivial adjacency matrices of $(\tilde{X},\tilde{\mathcal{R}})$.
Let $k$ be the valency of the graph corresponding to $\tilde{A}$.
Then

\begin{lem}
The following hold:
\begin{enumerate}
\item $k_1+k_2+k_3=k(m+1)$.
\item $k_1x_1+k_2x_2+k_3x_3=0$.
\item $k_1x_1^2+k_2x_2^2+k_3x_3^2=km(m+1)$.
\end{enumerate}
\end{lem}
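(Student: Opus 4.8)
The plan is to read off (i) from row sums, and to obtain (ii) and (iii) from the Kronecker-product description $A_1+A_2+A_3=J_r\otimes\tilde A$ together with the standard duality $m_iP_{ij}=k_jQ_{ji}$ between the eigenmatrices (equivalently $k_iQ_{i\ell}=m_\ell P_{\ell i}$) and the relation $A_iE_j=P_{ji}E_j$. Write $M=A_1+A_2+A_3$ and $r=m+1$. For (i), note that $M$ is the $0/1$ adjacency matrix of $R_1\cup R_2\cup R_3$, so it has constant row sum $k_1+k_2+k_3$; on the other hand $J_r\otimes\tilde A$ has row sums $r\cdot k=(m+1)k$, and equating the two proves (i). (Equivalently, $k_1+k_2+k_3$ is the eigenvalue of $M$ on $E_0$.)

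The key step for the remaining parts is that $M$ annihilates every primitive idempotent $E_j$ with $j\notin\mathcal J=\{0,3,6\}$. I would first verify $ME=M$ for $E=E_0+E_3+E_6=\frac1rJ_r\otimes I_s$: since $J_r^2=rJ_r$ we get $(J_r\otimes\tilde A)(\frac1rJ_r\otimes I_s)=J_r\otimes\tilde A=M$. Thus $M(I-E)=0$, and multiplying on the right by $E_j$ with $j\in\{1,2,4,5\}$ (for which $EE_j=0$, so $(I-E)E_j=E_j$) yields $ME_j=0$. As $ME_j=(P_{j1}+P_{j2}+P_{j3})E_j$, we conclude $P_{j1}+P_{j2}+P_{j3}=0$ for $j\in\{1,2,4,5\}$; these are precisely the eigenvalues of $M$ on the eigenspaces lying outside the quotient, and they vanish because $M\in E\bm AE$.

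With this in hand, (ii) follows by turning a valency-weighted sum of column-$1$ entries of $Q$ into an eigenvalue sum: $\sum_{i=1}^3k_ix_i=\sum_{i=1}^3k_iQ_{i1}=m\sum_{i=1}^3P_{1i}=0$, using $m=m_1$ and the vanishing eigenvalue of $M$ on $E_1$. For (iii) I would linearize $x_i^2$ via $c_2^*v_2^*(x)=x^2-m$, giving $x_i^2=c_2^*Q_{i2}+m$, whence $\sum_{i=1}^3k_ix_i^2=c_2^*\sum_{i=1}^3k_iQ_{i2}+m\sum_{i=1}^3k_i$. The first term equals $c_2^*m_2\sum_{i=1}^3P_{2i}=0$ by the vanishing eigenvalue of $M$ on $E_2$, and the second equals $m\cdot k(m+1)$ by (i), so the total is $km(m+1)$.

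The one genuinely delicate point here is conceptual rather than computational: one must correctly identify the eigenspaces on which $M$ acts as zero --- exactly the $E_j$ with $j\notin\mathcal J$ --- and then apply the $P$--$Q$ duality with the indices in the right places, so that each valency-weighted column sum $\sum_{i=1}^3k_iQ_{i\ell}$ collapses to $m_\ell$ times an eigenvalue of $M$. Once the identity $ME=M$ is observed, everything else is routine bookkeeping.
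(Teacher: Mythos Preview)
Your proof is correct, and the approach to (ii) and (iii) differs from the paper's. The paper instead uses the orthogonality $PQ=|X|I$: from $(PQ)_{01}=0$ and $(PQ)_{05}=0$ (the latter via the explicit fifth column of $Q$) it obtains two linear equations in the unknowns $k_1x_1+k_2x_2+k_3x_3$ and $k_4x_4+k_5x_5$, whose coefficient matrix has nonzero determinant $b_3^*c_2^*+b_4^*c_3^*>0$, forcing both sums to vanish. Then $(PQ)_{02}=0$ combined with (i) and $k_4x_4+k_5x_5=0$ yields (iii). Your route is more structural: by observing $ME=M$ you see directly that $M$ kills each $E_j$ with $j\notin\mathcal J$, so the partial eigenvalue sums $P_{j1}+P_{j2}+P_{j3}$ vanish for $j=1,2$, and the duality $k_iQ_{i\ell}=m_\ell P_{\ell i}$ converts these immediately into the desired valency-weighted sums. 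This avoids the $2\times 2$ linear system and the explicit form of the fifth column of $Q$; the paper's argument, on the other hand, produces $k_4x_4+k_5x_5=0$ as a by-product and stays within the single identity $PQ=|X|I$ without invoking the $P$--$Q$ duality relation.
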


\begin{proof}
(i) is the (constant) row sum of $A_1+A_2+A_3=J_r\otimes\tilde{A}$.
For (ii), observe
\begin{align*}
	0&=(PQ)_{01}=(k_1x_1+k_2x_2+k_3x_3)+(k_4x_4+k_5x_5), \\
	0&=c_2^*c_3^*(PQ)_{05}=-b_4^*c_3^*(k_1x_1+k_2x_2+k_3x_3)+b_3^*c_2^*(k_4x_4+k_5x_5).
\end{align*}
Since $b_3^*c_2^*+b_4^*c_3^*>0$ we obtain (ii), as well as $k_4x_4+k_5x_5=0$; the latter, together with $(PQ)_{02}=0$ and (i) above, implies (iii).
\end{proof}

Solving (i)--(iii) above for $k_1,k_2,k_3$ we routinely obtain, in particular,
\begin{equation*}
	k_1=\frac{k(m+1)(x_2x_3+m)}{(x_1-x_2)(x_1-x_3)}.
\end{equation*}
Note that $A_1$ has constant row sum $k_1=k(p_{16}^1+1)$, from which it follows that
\begin{equation}\label{p161+1}
	p_{16}^1+1=\frac{(m+1)(x_2x_3+m)}{(x_1-x_2)(x_1-x_3)}.
\end{equation}
Moreover, using \eqref{x1+x2+x3}, \eqref{x1x2+x1x3+x2x3} we obtain
\begin{gather}
	x_2x_3+m=x_1^2-a_2^*x_1-c_2^*(m-1), \\
	0<(x_1-x_2)(x_1-x_3)=3x_1^2-2a_2^*x_1-m-c_2^*(m-1). \label{denominator}
\end{gather}

\section{Proof of Theorem \ref{d=6 does not occur}}\label{sec: proof}

We are now ready to prove Theorem \ref{d=6 does not occur}.

Suppose first $p_{16}^1=0$.
Then \eqref{p161+1}--\eqref{denominator} would imply
\begin{align*}
	0 &= (m+1)(x_1^2-a_2^*x_1-c_2^*(m-1))-(3x_1^2-2a_2^*x_1-m-c_2^*(m-1)) \\
	&=(x_1-m)((m-2)x_1-1+c_2^*(m-1)).
\end{align*}
However, since
\begin{equation*}
	\frac{1-c_2^*(m-1)}{m-2}<\frac{1}{m-2}<m-1,
\end{equation*}
this contradicts \eqref{m-1<x1<m}.
Hence $p_{16}^1\geqslant 1$.

Fix a scalar $\alpha$ satisfying
\begin{equation*}
	\frac{m(m+1)}{m^2+1}<\alpha<\min\left\{\frac{m+1}{3},2\right\}.
\end{equation*}
Since $p_{16}^1+1\geqslant 2>\alpha$, by \eqref{p161+1}--\eqref{denominator} we find
\begin{equation*}
	(m+1)(x_1^2-a_2^*x_1-c_2^*(m-1))\geqslant\alpha (3x_1^2-2a_2^*x_1-m-c_2^*(m-1)),
\end{equation*}
or equivalently,
\begin{equation}\label{1st inequality}
	(m+1-3\alpha)x_1^2-(m+1-2\alpha)a_2^*x_1-(m+1-\alpha)c_2^*(m-1)+m\alpha\geqslant 0.
\end{equation}
On the other hand, since $ma_2^*-a_5^*c_3^*\geqslant 0$ it follows from \eqref{cubic factor} that
\begin{equation*}
	x_1^3-a_2^*x_1^2-(m+c_2^*(m-1))x_1\leqslant 0.
\end{equation*}
Multiplying both sides by $-\frac{m+1-3\alpha}{x_1}<0$ we find
\begin{equation}\label{2nd inequality}
	-(m+1-3\alpha)(x_1^2-a_2^*x_1-m-c_2^*(m-1))\geqslant 0.
\end{equation}
Combining \eqref{1st inequality}, \eqref{2nd inequality} we find
\begin{equation*}
	\alpha a_2^*(m-1)\leqslant\alpha a_2^*x_1\leqslant m(m+1)-2\alpha c_2^*(m-1)-2m\alpha,
\end{equation*}
from which it follows that
\begin{equation*}
	0<\alpha c_2^*(m-1)\leqslant m(m+1)-\alpha(m^2+1).
\end{equation*}
However, since the right-hand side above is negative, this is absurd.
The proof is complete.

\bigskip
\begin{ack}
RT would like to thank Akihiro Munemasa for discussions and encouragement.
HT is supported by the JSPS Excellent Young Researchers Overseas Visit Program. 
\end{ack}

\end{document}